\renewcommand*{\backref}[1]{}
\renewcommand*{\backrefalt}[4]{%
    \ifcase #1 (Not cited.)%
    \or        (p.\,#2)%
    \else      (pp.\,#2)%
    \fi}
\begin{document}

\newtheorem{theorem}{Theorem}
\newtheorem{lemma}[theorem]{Lemma}
\newtheorem{claim}[theorem]{Claim}
\newtheorem{cor}[theorem]{Corollary}
\newtheorem{prop}[theorem]{Proposition}
\newtheorem{definition}{Definition}
\newtheorem{question}[theorem]{Question}
\newtheorem{remark}[theorem]{Remark}
\newcommand{\hh}{{{\mathrm h}}}

\numberwithin{equation}{section}
\numberwithin{theorem}{section}
\numberwithin{table}{section}
\numberwithin{figure}{section}

\def\sssum{\mathop{\sum\!\sum\!\sum}}
\def\ssum{\mathop{\sum\ldots \sum}}
\def\iint{\mathop{\int\ldots \int}}

\def\ges{\gtrsim}
\def\les{\lesssim}

\def\sA{\mathscr A}
\def\sB{\mathscr B}
\def\sC{\mathscr C}
\def\sD{\Delta}
\def\sE{\mathscr E}
\def\sF{\mathscr F}
\def\sG{\mathscr G}
\def\sH{\mathscr H}
\def\sI{\mathscr I}
\def\sJ{\mathscr J}
\def\sK{\mathscr K}
\def\sL{\mathscr L}
\def\sM{\mathscr M}
\def\sN{\mathscr N}
\def\sO{\mathscr O}
\def\sP{\mathscr P}
\def\sQ{\mathscr Q}
\def\sR{\mathscr R}
\def\sS{\mathscr S}
\def\sU{\mathscr U}
\def\sT{\mathscr T}
\def\sV{\mathscr V}
\def\sW{\mathscr W}
\def\sX{\mathscr X}
\def\sY{\mathscr Y}
\def\sZ{\mathscr Z}

\def\squareforqed{\hbox{\rlap{$\sqcap$}$\sqcup$}}
\def\qed{\ifmmode\squareforqed\else{\unskip\nobreak\hfil
\penalty50\hskip1em\null\nobreak\hfil\squareforqed
\parfillskip=0pt\finalhyphendemerits=0\endgraf}\fi}

\newfont{\teneufm}{eufm10}
\newfont{\seveneufm}{eufm7}
\newfont{\fiveeufm}{eufm5}
%
%
%
%
\def\frak#1{{\fam\eufmfam\relax#1}}

\newcommand{\bflambda}{{\boldsymbol{\lambda}}}
\newcommand{\bfmu}{{\boldsymbol{\mu}}}
\newcommand{\bfxi}{{\boldsymbol{\xi}}}
\newcommand{\bfrho}{{\boldsymbol{\rho}}}

\newcommand{\bfalpha}{{\boldsymbol{\alpha}}}
\newcommand{\bfbeta}{{\boldsymbol{\beta}}}
\newcommand{\bfphi}{{\boldsymbol{\varphi}}}
\newcommand{\bfpsi}{{\boldsymbol{\psi}}}
\newcommand{\bftheta}{{\boldsymbol{\vartheta}}}

\def\fK{Frak K}
\def\fT{Frak{T}}

\def\fA{{Frak A}}
\def\fB{{Frak B}}
\def\fC{\mathfrak{C}}

\def \balpha{\bm{\alpha}}
\def \bbeta{\bm{\beta}}
\def \bgamma{\bm{\gamma}}
\def \blambda{\bm{\lambda}}
\def \bchi{\bm{\chi}}
\def \bphi{\bm{\varphi}}
\def \bpsi{\bm{\psi}}

\def\eqref#1{(\ref{#1})}

\def\vec#1{\mathbf{#1}}


\def\cA{{\mathcal A}}
\def\cB{{\mathcal B}}
\def\cC{{\mathcal C}}
\def\cD{{\mathcal D}}
\def\cE{{\mathcal E}}
\def\cF{{\mathcal F}}
\def\cG{{\mathcal G}}
\def\cH{{\mathcal H}}
\def\cI{{\mathcal I}}
\def\cJ{{\mathcal J}}
\def\cK{{\mathcal K}}
\def\cL{{\mathcal L}}
\def\cM{{\mathcal M}}
\def\cN{{\mathcal N}}
\def\cO{{\mathcal O}}
\def\cP{{\mathcal P}}
\def\cQ{{\mathcal Q}}
\def\cR{{\mathcal R}}
\def\cS{{\mathcal S}}
\def\cT{{\mathcal T}}
\def\cU{{\mathcal U}}
\def\cV{{\mathcal V}}
\def\cW{{\mathcal W}}
\def\cX{{\mathcal X}}
\def\cY{{\mathcal Y}}
\def\cZ{{\mathcal Z}}
\newcommand{\rmod}[1]{\: \mbox{mod} \: #1}

\def\cg{{\mathcal g}}

\def\vr{\mathbf r}

\def\e{{\mathbf{\,e}}}
\def\ep{{\mathbf{\,e}}_p}
\def\em{{\mathbf{\,e}}_m}

\def\Tr{{\mathrm{Tr}}}
\def\Nm{{\mathrm{Nm}}}

 \def\SS{{\mathbf{S}}}

\def\lcm{{\mathrm{lcm}}}
\def\ord{{\mathrm{ord}}}

\def\({\left(}
\def\){\right)}
\def\fl#1{\left\lfloor#1\right\rfloor}
\def\rf#1{\left\lceil#1\right\rceil}

\def\mand{\qquad \mbox{and} \qquad}

%
%
%




\hyphenation{re-pub-lished}

\mathsurround=1pt

\def\bfdefault{b}
\overfullrule=5pt

\def \F{{\mathbb F}}
\def \K{{\mathbb K}}
\def \N{{\mathbb N}}
\def \Z{{\mathbb Z}}
\def \Q{{\mathbb Q}}
\def \R{{\mathbb R}}
\def \C{{\mathbb C}}
\def\Fp{\F_p}
\def \fp{\Fp^*}

\def\Kmnp{\cK_p(m,n)}
\def\Kmnq{\cK_q(m,n)}
\def\Kmnp{\cK_p(m,n)}
\def\Kxmnq{\cK_q(\bfxi; m,n)}
\def\Kxmnp{\cK_p(\bfxi; m,n)}
\def\Kxnumnp{\cK_{\nu,p}(\bfxi; m,n)}
\def\Kxnumnq{\cK_{\nu,q}(\bfxi; m,n)}

\def\Kmn{\cK_p(m,n)}
\def\psmn{\psi_p(m,n)}

\def \xbar{\overline x}
\def\e{{\mathbf{\,e}}}
\def\ep{{\mathbf{\,e}}_p}
\def\eq{{\mathbf{\,e}}_q}

\title[Exponential sums with sparse polynomials]{Exponential sums with sparse polynomials over finite fields}
\author{Igor Shparlinski and Qiang Wang}

\address{School of Mathematics and Statistics, 
The University of New South Wales, 
Sydney NSW 2052, 
Australia}
\email{igor.shparlinski@unsw.edu.au}

\address{School of Mathematics and Statistics, 
Carleton University, 
1125 Colonel By Drive, 
Ottawa, ON K1S 5B6, 
Canada}
\email{wang@math.carleton.ca}

\begin{abstract} We obtain new bounds of exponential sums modulo a prime $p$  with sparse polynomials  $a_0x^{n_0} + \cdots  + a_{\nu}x^{n_\nu}$. The bounds depend on  various greatest common divisors of exponents $n_0, \ldots, n_\nu$ and  their  differences. In particular, two new bounds for binomials are obtained, improving previous results in broad ranges of parameters. 
 \end{abstract}

\keywords{Exponential sums, sparse polynomials, binomials}
\subjclass[2010]{11L07,  11T23}

\maketitle

\section{Introduction}

For a prime $p$,  positive integers $n_0, \ldots, n_\nu$ and arbitrary integer coefficients $a_0, \ldots, a_\nu$,  
we consider the exponential sum
\[
S_{n_0, \ldots, n_\nu}(a_0, \ldots, a_\nu) = \sum_{x=0}^{p-1} \ep(a_0 x^{n_0} + \cdots + a_\nu x^{n_\nu}),
\]
where $\ep(x) = e^{2\pi i x/p}$.

For the convenience, we denote 
$$
M_{n_0, \ldots, n_\nu} = \max_{\substack{a_0, \ldots, a_\nu \in \Z\\ \gcd\(\prod_{i=0}^k a_i,p\)=1}} \left|S_{n_0, \ldots, n_\nu}(a_0, \ldots, a_\nu)\right|.
$$

The classical Weil bound on  exponential sums with general polynomials,  see, for example,~\cite[Theorem~5.38]{LN}, 
implies that if at least one of the coefficients does not vanish modulo $p$ then 
\begin{equation}
\label{eq:Weil}
|  S_{n_0, \ldots, n_\nu}(a_0, \ldots, a_\nu) | \le \max\{n_0, \ldots, n_\nu\} p^{1/2}. 
\end{equation}

Clearly, the bound~\eqref{eq:Weil} becomes trivial if $ \max\{n_0, \ldots, n_\nu\} \ge p^{1/2}$. Thus starting from work of  Akulinichev~\cite{Aku}, there have been a chain of consistent efforts to obtain nontrivial bounds beyond this restriction, see~\cite{CCP1,CCP2, CP05, CP10, Mac, MPSS, MSS} and references therein.

The case   of binomial sums (that is, $\nu=1$) has always been of special interest~\cite{CP03, CP11, Kar, SV20}.  
In particular, bounds for  
binomial sums have played a key role in  resolution of {\it Goresky--Klapper conjecture}, see~\cite{GK}, and 
its generalisation~\cite{ACMPPRT,BCPP1,CoKo,CP11,GKMS,CMPR} and in the closely related generalised 
{\it Lehmer conjecture\/}~\cite{BCPP2}. 
 
In fact in the case of binomial sums $S_{m, n}(a,b)$ one is especially interested in $m=1$ which is important for the above applications. 

 For example, if $m=1$ Akulinichev~\cite[Theorem~1]{Aku} has given the bound 
\begin{equation}
\label{eq:Aku}
M_{1, n}  \le p/\sqrt{\gcd(n, p-1)}, 
\end{equation}
and then  combining~\eqref{eq:Aku}  with~\eqref{eq:Weil}  shown that for $n \mid p-1$ the
following uniform bound holds 
\begin{equation}
\label{eq:Aku 56}
M_{1, n}  \le p^{5/6},
\end{equation}
see~\cite[Corollary]{Aku}. The bound~\eqref{eq:Aku 56} has been improved in~\cite[Corollary~3.3]{SV20}
as follows
\begin{equation}
\label{eq:SV 45}
M_{1, n}  = O\(p^{4/5}\),
\end{equation}
(with an absolute implied constant).

In fact, the bound~\eqref{eq:SV 45} is based on an improvement of a bound of 
Karatsuba~\cite[Theorem~1]{Kar}
$$
M_{1, n} \le (n-1)^{1/4} p^{3/4}, 
$$
which by~\cite[Theorem~3.2]{SV20} can be replaced with 
\begin{equation}
\label{eq:ShpVol}
M_{1, n}  \le p^{3/4} + (n-1)^{1/3} p^{2/3}. 
\end{equation}

We also recall the  following bound of  Cochrane and Pinner~\cite[Theorem~1.3]{CP11} 
\begin{equation}\label{eq:CP11}
M_{m, n}  \le  d + 2.292 e^{13/46} p^{89/92}, 
\end{equation}  
where $d =    \gcd(n-m, p-1)$ and $e=   \gcd(m, n, p-1)$.

 The bounds of sums with more monomials usually involve more parameters and conditions
 and are somewhat too technical to survey and compare here. However we believe that our bounds
expand the class of general sparse polynomials which admit nontrivial bounds and they certainly do
so in the case of binomial sums.

\section{Main Results}

We  recall that  the notations $A=O(B)$, $A\ll B$ and $B \gg A$ are each equivalent to the
statement that the inequality $A\le c\,B$ holds with a
constant $c>0$ which is absolute throughout this paper. 

In what follows, it  is convenient to introduce notation $A\les B$ and $B\ges A$ as equivalents
of $A \le p^{o(1)} B$ as $p\to \infty$. 

\begin{theorem}
\label{thm:Bound de} For $\nu\ge 2$   positive integers $n_0, n_1, \ldots, n_\nu$,  we denote 
$$ d = \gcd(n_1 - n_0, \ldots, n_{\nu}-n_{0}, p-1),   \qquad e  = \gcd(d, n_0),  .
$$
and 
$$
D =\min_{0\leq i \leq \nu} \max_{j\neq i} \gcd(n_j- n_i, p-1), \qquad \Gamma = (p-1)/D,  \qquad  \Delta = d/e.
$$
Then 
$$
M_{n_0, \ldots, n_\nu} \les
  \begin{cases} 
 \Delta^{-1/4}\Gamma^{-1/4\nu}p^{7/6}, &\quad  \text{if $p^{29/48} \le \Delta <p^{2/3}$,}\\
 \Delta^{-21/52}\Gamma^{-1/4\nu}p^{131/104}, &\quad  \text{if $p^{59/112} \le  \Delta <p^{29/48}$,}\\
 \Delta^{-7/20}\Gamma^{-1/4\nu}p^{197/160}, &\quad  \text{if $p^{1/2} \le  \Delta <p^{59/112}$,}\\
 \Delta^{-31/80}\Gamma^{-1/4\nu}p^{5/4}, & \quad \text{if $ \Delta< p^{1/2}$.}
\end{cases}   
$$  
\end{theorem}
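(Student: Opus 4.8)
The plan is to reduce the full sparse sum to complete exponential sums over a multiplicative subgroup of $\fp$, and then to feed additive-energy estimates for such subgroups into a H\"older argument, separately extracting the factor $\Gamma^{-1/4\nu}$ from a single well-chosen pair of exponents. For the reduction, I would fix the index $i$ realising the minimum in the definition of $D$ and factor out $x^{n_i}$, writing $f(x)=x^{n_i}g_i(x)$ with $g_i(x)=a_i+\sum_{l\neq i}a_l x^{n_l-n_i}$. Since $n_l\equiv n_0\pmod d$ for all $l$, one has $\gcd(d,n_i)=\gcd(d,n_0)=e$ for every $i$, and the map $x\mapsto\bigl(x^{n_l-n_i}\bigr)_{l\neq i}$ has kernel $\mu_d$, hence $(p-1)/d$ fibres of size $d$ on which $g_i$ is constant. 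On the fibre through a representative $x_0$ the factor $x^{n_i}$ runs over the coset $x_0^{n_i}\mu_\Delta$, with $\Delta=d/e$, each value attained exactly $e$ times. Thus, after separating the $O(1)$ contribution of $x=0$ and of the fibres on which $g_i$ vanishes,
$$
S_{n_0,\dots,n_\nu}(a_0,\dots,a_\nu)=e\sum_{F}T(\beta_F)+O(1),\qquad T(\beta)=\sum_{\eta\in\mu_\Delta}\ep(\beta\eta),
$$
where $F$ runs over the $(p-1)/d$ fibres and $\beta_F$ is well defined modulo $\mu_\Delta$, so that $|T(\beta_F)|$ is independent of the choice of representative.

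For the subgroup saving, write $N=(p-1)/d$ and apply H\"older with an even exponent $2k$, so that
$$
\Bigl|\sum_F T(\beta_F)\Bigr|\le N^{1-1/2k}\Bigl(\sum_F|T(\beta_F)|^{2k}\Bigr)^{1/2k}\ll N^{1-1/2k}\bigl(pJ_k\bigr)^{1/2k},
$$
where (using that each frequency $\beta_F$ occurs with controlled multiplicity) the inner moment is bounded by $p\,J_k$ with $J_k$ the number of solutions of $\eta_1+\dots+\eta_k=\eta_{k+1}+\dots+\eta_{2k}$ in $\mu_\Delta$, i.e.\ the additive energy of the subgroup of order $\Delta$. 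Since $eN=(p-1)/\Delta$, inserting the sharpest available estimates for $J_k$ and optimising $k$ yields a bound of the shape $\Delta^{-\alpha}p^{\beta}$; the four regimes, and in particular the thresholds $p^{2/3},\,p^{29/48},\,p^{59/112},\,p^{1/2}$, arise precisely as the points where the optimal choice of $k$ and the applicable energy estimate change.

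The factor $\Gamma^{-1/4\nu}$ I would extract by a multiplicative H\"older/Weyl-differencing argument over the dilations $x\mapsto xy$, $y\in\fp$, carried out symmetrically over the $\nu$ differences $n_l-n_i$; this is why $D$ appears as a minimax (we must allow the worst forced base index $i$, but may then choose the best auxiliary exponent $j$). After taking a $\nu$-th root this reduces to a single binomial-type sum attached to the pair $(n_i,n_j)$ with $\gcd(n_j-n_i,p-1)=D$, whose exponent difference lies in the subgroup of order $\Gamma=(p-1)/D$; an Akulinichev-type estimate in the spirit of \eqref{eq:Aku}, via the fourth moment of the corresponding subgroup sum, gains $\Gamma^{-1/4}$ there, which the $\nu$-th root turns into $\Gamma^{-1/4\nu}$. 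Combining this with the $\Delta^{-\alpha}p^\beta$ bound of the previous step gives the stated estimates, specialising for $\nu=1$ to the two new binomial bounds.

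The main obstacle is twofold. First, one must combine the two subgroup mechanisms—the common-gcd reduction controlling $\Delta$ via the subgroup $\mu_\Delta$, and the single-difference reduction controlling $\Gamma$ via $\mu_\Gamma$—so that performing the dilation averaging and the fibre decomposition are compatible and neither saving is cancelled; the multiplicity control in the moment $\sum_F|T(\beta_F)|^{2k}\ll pJ_k$ also has to be made rigorous here. Second, the delicate computational heart is selecting, in each range of $\Delta$, the energy estimate and the H\"older parameter $k$ that are simultaneously optimal: it is this optimisation that forces the unusual exponents $21/52,\,7/20,\,31/80$ together with the matching powers $131/104,\,197/160,\,5/4$ of $p$, and checking that the case boundaries coincide with the stated thresholds is the most arithmetic-heavy part of the argument.
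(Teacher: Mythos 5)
Your overall plan---pass to the multiplicative subgroup of order $\Delta=d/e$ and feed additive-energy estimates for that subgroup into a H\"older argument---is the paper's first mechanism, and your fibre decomposition is essentially the paper's averaging $x\mapsto xy^{s}$ with $s=(p-1)/d$. But the multiplicity problem you flag at the end is not a technicality to be patched later: it is exactly where the second main input of the proof lives, and you have no substitute for it. The bound $\sum_F|T(\beta_F)|^{2k}\ll pJ_k$ is simply false without controlling how often a given frequency repeats among the fibres (in the worst case all $\beta_F$ coincide and the left-hand side is of size $N\Delta^{2k}$). The paper resolves this by keeping the weights: with $N(\lambda)$ the number of $x\in\F_p^*$ with $f(x)=\lambda$, H\"older with a \emph{fixed} exponent $4$ gives $|S^*|^4\ll p^{-3}\(\sum_\lambda N(\lambda)\)^2\(\sum_\lambda N(\lambda)^2\) T_r$ (see~\eqref{eq:S NN2Mld}), and the middle factor --- the number of points on the sparse curve $f(x)=f(y)$ --- is bounded by $O(p^{2-1/\nu}D^{1/\nu})$ via Lemma~\ref{lem:SprCurve Zeros}, which rests on the CFKLLS zero-count for sparse polynomials (Lemma~\ref{lem:SprEq Zeros}). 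Since $D=(p-1)/\Gamma$, this is precisely where the factor $\Gamma^{-1/4\nu}$ comes from, and it is also what legitimises the moment computation. Your alternative route to $\Gamma^{-1/4\nu}$ --- Weyl differencing over dilations, reduction to a distinguished binomial pair $(n_i,n_j)$, and an Akulinichev-type gain of $\Gamma^{-1/4}$ --- is unsubstantiated and does not look workable: the mechanism behind~\eqref{eq:Aku} saves a power of $\gcd(n,p-1)=D$, not of $\Gamma=(p-1)/D$, and the minimax structure of $D$ enters the paper through the zero-count of the differenced sparse polynomial, not through a choice of a single pair of exponents.

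A second, smaller misdescription: the four regimes do not arise from optimising the H\"older parameter $k$. The paper uses the fourth moment ($k=2$) throughout; the exponents $21/52$, $7/20$, $31/80$ and the thresholds $p^{2/3}$, $p^{29/48}$, $p^{59/112}$, $p^{1/2}$ are read off directly from the piecewise additive-energy bounds of Shkredov and of Murphy--Rudnev--Shkredov--Shteinikov for subgroups of order $\Delta$, as packaged in Lemma~\ref{lem:Mt} and specialised in~\eqref{eq:Mr}. (The paper's Comments section explicitly notes that higher moments would need energy bounds that are not currently strong enough to help.) So the ``delicate optimisation over $k$'' you anticipate is not part of the argument; the genuinely missing piece is the weighted H\"older step together with Lemma~\ref{lem:SprCurve Zeros}.
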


The above bound is nontrivial when
$$ \max\{p^{29/48}, \Gamma^{-1/\nu} p^{2/3}\}<  \Delta < p^{2/3}, 
$$ 
or 
$$ \max\{p^{59/112}, \Gamma^{-13/21\nu} p^{27/42}\} <  \Delta < p^{29/48},
$$
or 
$$ \max\{p^{1/2}, \Gamma^{-5/7\nu} p^{37/56}\} <  \Delta < p^{59/112},
$$
or 
$$ \Gamma^{-20/31\nu} p^{20/31} <  \Delta < p^{1/2}.
$$

When $\nu=1$, it is easy to see that   $D=d$ and
$$
e = \gcd( \gcd(n -m, p-1), m) = \gcd(n -m, m, p-1) = \gcd(m,n, p-1) 
$$
and thus for binomials   the following result holds.

\begin{cor} 
\label{cor:Bound de} For positive integers $m$ and $n$,  we denote 
$$ d=  \gcd(n-m, p-1) \mand  e =  \gcd(m,n, p-1).
$$
Then 
$$
M_{m, n} \les
  \begin{cases} 
e^{1/4}p^{11/12} , &\quad  \text{if $p^{29/48} \le d/e<p^{2/3}$,}\\
e^{21/52}d^{-2/13}p^{105/104}, &\quad  \text{if $p^{59/112} \le d/e <p^{29/48}$,}\\
e^{7/20}d^{-1/10}p^{157/160},  &\quad  \text{if $p^{1/2} \le d/e <p^{59/112}$,}\\
e^{31/80}d^{-11/80}p, & \quad \text{if $ d/e< p^{1/2}$.}
\end{cases}   
$$
\end{cor}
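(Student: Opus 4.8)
The plan is to derive Corollary~\ref{cor:Bound de} as the specialisation of Theorem~\ref{thm:Bound de} to the case $\nu=1$, $n_0=m$, $n_1=n$. Although the theorem is stated for $\nu\ge 2$, its proof carries over to the case of a single difference of exponents, so one may simply substitute $\nu=1$ into the four bounds; the only genuine point to check is that no step of the argument for Theorem~\ref{thm:Bound de} actually requires more than one nonleading monomial, which I would verify first. Granting this, I would begin by recording the relevant parameters. With only the difference $n-m$ available, the quantity $D=\min_{0\le i\le 1}\max_{j\ne i}\gcd(n_j-n_i,p-1)$ collapses to $\gcd(n-m,p-1)=d$, so that $\Gamma=(p-1)/d$; moreover $e=\gcd(d,m)=\gcd(m,n,p-1)$ and $\Delta=d/e$, exactly as noted in the text preceding the statement.

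The remaining work is purely the exponent bookkeeping. In each of the four regimes the bound of Theorem~\ref{thm:Bound de} has the common shape $\Delta^{-\alpha}\Gamma^{-1/(4\nu)}p^{\beta}$ for suitable rational constants $\alpha,\beta$. Setting $\nu=1$ and inserting $\Delta=d/e$ together with $\Gamma=(p-1)/d$, and observing that $(p-1)/d$ and $p/d$ differ by a bounded factor that is absorbed by the notation $\les$, I would rewrite this uniformly as
\[
\Delta^{-\alpha}\Gamma^{-1/4}p^{\beta}
\les e^{\alpha}\,d^{1/4-\alpha}\,p^{\beta-1/4}.
\]
It then remains to compute $\alpha$, $1/4-\alpha$ and $\beta-1/4$ in each case.

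Carrying this out case by case gives: for $(\alpha,\beta)=(1/4,7/6)$ the $d$-exponent vanishes and one obtains $e^{1/4}p^{11/12}$; for $(21/52,131/104)$ one gets $d$-exponent $1/4-21/52=-2/13$ and $p$-exponent $131/104-1/4=105/104$, i.e.\ $e^{21/52}d^{-2/13}p^{105/104}$; for $(7/20,197/160)$ the exponents are $-1/10$ and $157/160$; and for $(31/80,5/4)$ they are $-11/80$ and $1$. These match the four lines of the corollary, and since the regime conditions of Theorem~\ref{thm:Bound de} are phrased in terms of $\Delta=d/e$ they transfer without change to the conditions on $d/e$ stated here. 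I expect no essential obstacle beyond confirming the $\nu=1$ specialisation of the theorem's proof; everything else is the routine rational-exponent arithmetic indicated above.
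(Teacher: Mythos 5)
Your proposal is correct and follows exactly the paper's own route: the authors likewise obtain the corollary by specialising Theorem~\ref{thm:Bound de} to $\nu=1$ (the ``$\nu\ge 2$'' in the theorem's statement is really a count of at least two exponents, as the underlying Lemmas apply for $\nu\ge 1$), noting $D=d$, $\Gamma=(p-1)/d$ and $e=\gcd(d,m)=\gcd(m,n,p-1)$. Your exponent bookkeeping $\Delta^{-\alpha}\Gamma^{-1/4}p^{\beta}\les e^{\alpha}d^{1/4-\alpha}p^{\beta-1/4}$ checks out in all four regimes.
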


 In the case when  integers $m$ and $n$  satisfy  
$$
n \mid p-1 \mand \gcd(m,n)=1
$$
Akulinichev~\cite[Theorem~3]{Aku} has shown that 
\begin{equation}\label{eq:A65}
M_{m, n} \le pn^{-1} + h^{1/2} p^{3/4}
\end{equation}
where $h = \gcd(m,p-1)$.  
Using the same idea  as in the proof of Theorem~\ref{thm:Bound de}  we obtain another bound in terms of $h$ and $m$ which improve previous bounds
in some other cases.

\begin{theorem} \label{thm:Bound ell}
For positive integers $m$ and $n$ such that 
$$
n \mid p-1 \mand \gcd(m, n)=1 \mand  h = \gcd(m,p-1).
$$
Then  
$$
M_{m, n} \les
  \begin{cases} 
h^{1/4}n^{-1/4} p^{11/12}, &\quad  \text{if $p^{29/48} \le n <p^{2/3}$,}\\
h^{1/4}n^{-21/52}p^{105/104}, &\quad  \text{if $p^{59/112} \le n<p^{29/48}$,}\\
h^{1/4}n^{-7/20}p^{157/160}, &\quad  \text{if $p^{1/2} \le n<p^{59/112}$,}\\
h^{1/4}n^{-31/80}p, & \quad \text{if $ n< p^{1/2}$.}
\end{cases}   
$$
\end{theorem}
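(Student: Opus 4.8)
The plan is to re-run the argument behind Theorem~\ref{thm:Bound de}, but to exploit the two extra hypotheses $n\mid p-1$ and $\gcd(m,n)=1$ through a sharper fibration that forces the generic parameters of that theorem to take the favourable values $\Delta=n$ and $\Gamma=p/h$ (equivalently $D=h$), rather than the values $\Delta=d$, $D=d=\gcd(n-m,p-1)$ that the unconditional Corollary~\ref{cor:Bound de} is forced to use. Fix a primitive root $g$ modulo $p$ and isolate $x=0$, so that it suffices to bound $\sum_{x\in\F_p^*}\ep(ax^m+bx^n)$ with $\gcd(ab,p)=1$. Since $n\mid p-1$, the map $x\mapsto x^n$ is $n$-to-one from $\F_p^*$ onto the subgroup $H$ of $n$-th powers, with fibres $x_0(w)\mu_n$, where $\mu_n$ is the group of $n$-th roots of unity; on a fibre $x^n=w$ we have $x^m=x_0(w)^m\zeta^m$ with $\zeta\in\mu_n$, and since $\gcd(m,n)=1$ the map $\zeta\mapsto\zeta^m$ permutes $\mu_n$. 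Hence
\[
S_{m,n}(a,b)=1+\sum_{w\in H}\ep(bw)\sum_{\eta\in\mu_n}\ep\!\left(a\,x_0(w)^m\eta\right),
\]
so the inner sums run over the subgroup $\mu_n$ of order $n$, which is the role of $\mu_\Delta$ with $\Delta=n$ (and multiplicity $e=\gcd(m,n,p-1)=1$) in the fibration used for Theorem~\ref{thm:Bound de}.

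Next I would extract $h$ from the outer variable. Writing $w=g^{nu}$ and $x_0(w)=g^u$, the inner sum is invariant under $a\,x_0(w)^m\mapsto a\,x_0(w)^m\eta_0$ for $\eta_0\in\mu_n$, hence depends only on $mu$ modulo $(p-1)/n$. The decisive arithmetic point is that $\gcd(m,n)=1$ yields $h=\gcd(m,p-1)=\gcd\!\bigl(m,(p-1)/n\bigr)$, so as $u$ varies this frequency takes each of its values with multiplicity exactly $h$. Collecting the $h$ indices $u=u_v+t(p-1)/(nh)$, $0\le t<h$, that share a given frequency and summing the resulting progression $\sum_{t}\ep\!\bigl(b\,g^{nu_v}\rho^{\,t}\bigr)$ over the $h$-th roots of unity $\rho^{\,t}$, $\rho=g^{(p-1)/h}$, turns the outer oscillation into a companion subgroup sum over $\mu_h$. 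This is exactly the differencing step that produces the factor $\Gamma^{-1/4\nu}$ in Theorem~\ref{thm:Bound de}, and it realises the effective value $D=h$, i.e. $\Gamma=(p-1)/D\asymp p/h$.

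With the sum in this form it coincides, term by term, with the object estimated in the proof of Theorem~\ref{thm:Bound de} under the specialisation $\nu=1$, $e=1$, $\Delta=n$, $\Gamma=p/h$. I would therefore feed it into the same one-variable inputs used there — the improved Karatsuba bound~\eqref{eq:ShpVol}, Akulinichev's bound~\eqref{eq:Aku}, and the Weil bound~\eqref{eq:Weil} — and carry out the same optimisation, so that the break-points $p^{29/48}$, $p^{59/112}$, $p^{1/2}$ and the four exponent profiles are inherited verbatim. As a check, substituting $\Delta=n$, $\Gamma=p/h$, $e=1$ into the four cases of Theorem~\ref{thm:Bound de} gives $n^{-1/4}(p/h)^{-1/4}p^{7/6}=h^{1/4}n^{-1/4}p^{11/12}$ in the first range, and likewise $h^{1/4}n^{-21/52}p^{105/104}$, $h^{1/4}n^{-7/20}p^{157/160}$ and $h^{1/4}n^{-31/80}p$ in the remaining three, which are precisely the claimed bounds.

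The main obstacle is not the final estimation, which is taken over intact from Theorem~\ref{thm:Bound de}, but justifying the replacement of the generic parameters by $n$ and $h$. One must check that the $n$-th power fibration together with $\gcd(m,n)=1$ genuinely realises $\Delta=n$ with trivial multiplicity $e=1$, and that the outer frequencies repeat with multiplicity exactly $h=\gcd(m,(p-1)/n)$, so that the companion sum really is over $\mu_h$ and supplies the full factor $h^{1/4}$ without eroding the saving in $n$. Concretely this requires controlling the additive energy of the subgroups $\mu_n$ and $\mu_h$ — vanishing sums of roots of unity — well enough for both to be admissible inputs to the core bounds, and keeping the two subgroup contributions sharp simultaneously, rather than losing a factor of $p^{o(1)}$ at their interface, is the delicate part.
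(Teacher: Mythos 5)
Your opening move is the right one and matches the paper: using $n\mid p-1$ and $\gcd(m,n)=1$ you fibre the sum so that the binomial splits into an outer sum weighted by $\ep(bw)$ over the $n$-th powers and an inner complete sum over the order-$n$ subgroup attached to the $x^m$ term (the paper does the same thing via the substitution $x\mapsto xy^s$ with $s=(p-1)/n$, noting that $y^{ms}$ and $y^{s}$ run over that subgroup with equal multiplicities precisely because $\gcd(m,n)=1$). Your formal check that the claimed bounds are the specialisation $\Delta=n$, $D=h$, $e=1$, $\nu=1$ of Theorem~\ref{thm:Bound de} is also correct. But the argument stops exactly where the real work begins, and the machinery you name for finishing it is the wrong machinery: the break-points $p^{29/48}$, $p^{59/112}$, $p^{1/2}$ and the exponents $21/52$, $7/20$, $31/80$ do not come from the Weil bound~\eqref{eq:Weil}, Karatsuba's bound~\eqref{eq:ShpVol} or Akulinichev's bound~\eqref{eq:Aku} --- none of these is used anywhere in the proofs of Theorems~\ref{thm:Bound de} or~\ref{thm:Bound ell}. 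They come from Lemma~\ref{lem:Mt}, i.e.\ from the Shkredov and Murphy--Rudnev--Shkredov--Shteinikov bounds on the additive energy of multiplicative subgroups, fed in through two applications of the Cauchy inequality.

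Concretely, the paper's route after the fibration is: Cauchy once to get $|S^*|^2\le p^{-1}\sum_{\lambda}R(\lambda)\sum_{x}\ep(a\lambda x^m)$ with $R(\lambda)=\#\{(y,z):y^{s}-z^{s}=\lambda\}$; Cauchy again to get $|S^*|^4\ll h\,T_s$, where $\sum_{\lambda}R(\lambda)^2=T_s$ is the additive energy (fourth moment) of the order-$n$ subgroup, and the factor $h$ is simply $\#\{(u,v):u^m=v^m\}/(p-1)=\gcd(m,p-1)$ --- a trivial count, the $\nu=1$ instance of Lemma~\ref{lem:SprCurve Zeros}, not an additive-energy estimate for a subgroup of order $h$. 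So the ``delicate part'' you flag (controlling the additive energy of \emph{both} subgroups and keeping the two contributions sharp simultaneously) is not actually an issue: only the energy of the order-$n$ subgroup enters, via Lemma~\ref{lem:Mt} with $(p-1)/t=n$, and the $h^{1/4}$ comes for free from the trivial count. As written, your proposal identifies the correct fibration and the correct final answer, but omits the Cauchy--Schwarz/additive-energy core that actually produces the four-case bound, so it is not yet a proof.
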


It is difficult to give an exact region when Theorem~\ref{thm:Bound de}  
improves the large variety of previous results more sparse polynomials. However, in Section~\ref{sec:comp} we compare Corollary~\ref{cor:Bound de}
and Theorem~\ref{thm:Bound ell}  
 with previous bounds~~\eqref{eq:CP11} and~\eqref{eq:A65}
for binomial sums, which also depend on various greatest common divisors (while the bounds~\eqref{eq:Weil} 
and~\eqref{eq:ShpVol}  depend on the size of the exponents and so are incomparable with our result

%

\section{Preparations}
\label{sec:prelim}

For a positive integer $t \mid p-1$ we use $T_t$ to denote the number of solutions to the equation
$$
u^t + v^t \equiv x^t + y^t \pmod p, \qquad 1 \le u,v,x,y < p.
$$

We estimate $T_t$ via a  combination of recent results of  Shkredov~\cite[Theorem~8]{Shkr1}
and of Murphy,  Rudnev,  Shkredov, and 
Shteinikov~\cite[Theorems~1.4 and~6.5]{MRSS}  (which in turn improve the previous result of  
Heath-Brown and  Konyagin~\cite[Lemma~3]{HBK}) on  the {\it additive energy of multiplicative subgroups\/}, that is, 
on the number of solutions $E(\Gamma)$ to the equations
$$
u_1+u_2 = v_1+v_2, \qquad u_1,u_2 , v_1, v_2 \in \Gamma,
$$
where $\Gamma$ is a multiplicative subgroup of $\F_p^*$.

We also note that unfortunately there is a misprint in the formulation of~\cite[Theorem~8]{Shkr1}
(some terms ought be commuted and the symbol `$\min$' ought to stay in a different place). More specifically, in the notation 
of~\cite{Shkr1} the 
bound given by~\cite[Theorem~8]{Shkr1},  is of the form 
\begin{align*} 
E(\Gamma) & \le  \(\# \Gamma\)^3 p^{-1/3} \log \# \Gamma \\
&\qquad  \quad + \min\left \{ p^{1/26} \(\# \Gamma\)^{31/13} \log^{8/13}\# \Gamma, \(\# \Gamma\)^{32/13} \log^{41/65} \# \Gamma\right \},
\end{align*}
where 
$$
E(\Gamma) = \#\left\{\(u,v,x,y\) \in \Gamma^4:~u+v= x+y \right\}
$$
is the  additive energy of $\Gamma$. 

Taking the above into account, we derive the following bound on $T_t$, where we   have suppressed some logarithmic 
factors via the use of  the symbol `$\les$'.

\begin{lemma}
\label{lem:Mt} 	
We have 
$$
T_t \les
\begin{cases} 
 p^{8/3} t, &\quad  \text{if $p^{29/48} \le (p-1)/t <p^{2/3}$,}\\
  p^{63/26} t^{21/13}, &\quad  \text{if $p^{59/112} \le (p-1)/t <p^{29/48}$,}\\
 p^{101/40} t^{7/5},  &\quad  \text{if $p^{1/2} \le (p-1)/t <p^{59/112}$,}\\
 p^{49/20} t^{31/20}, & \quad \text{if $ (p-1)/t < p^{1/2}$.}
\end{cases} 
$$
\end{lemma}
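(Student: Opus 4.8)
The plan is to reduce $T_t$ to the additive energy of the multiplicative subgroup $\Gamma \subseteq \fp$ of $t$-th powers, and then to feed in the energy bounds recorded just before the lemma. Since $t \mid p-1$, the map $x \mapsto x^t$ sends $\fp$ onto $\Gamma$ exactly $t$-to-one, so $\# \Gamma = (p-1)/t$. Grouping each quadruple $(u,v,x,y)$ according to the values $(u^t,v^t,x^t,y^t) \in \Gamma^4$, with every fibre of size $t$, gives
$$
T_t = t^4\, E(\Gamma), \qquad \# \Gamma = (p-1)/t .
$$
Thus the four ranges of $(p-1)/t$ in the statement are exactly four ranges for $\# \Gamma$, and it remains to bound $E(\Gamma)$ in each of them and multiply by $t^4$, using $t \asymp p/\# \Gamma$.

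For the two largest ranges I would use the corrected Shkredov bound quoted above, which after suppressing logarithms reads
$$
E(\Gamma) \les (\# \Gamma)^3 p^{-1/3} + \min\left\{ p^{1/26}(\# \Gamma)^{31/13},\ (\# \Gamma)^{32/13}\right\}.
$$
A direct comparison shows $(\# \Gamma)^3 p^{-1/3}$ dominates $p^{1/26}(\# \Gamma)^{31/13}$ exactly when $\# \Gamma \ge p^{29/48}$; this yields $E(\Gamma) \les (\# \Gamma)^3 p^{-1/3}$ and, after substituting $\# \Gamma = (p-1)/t$, the bound $T_t \les t\, p^{8/3}$ of the first case. The two terms inside the minimum cross at $\# \Gamma = p^{1/2}$, so for $\# \Gamma \ge p^{1/2}$ the minimum equals $p^{1/26}(\# \Gamma)^{31/13}$, and the same substitution gives $T_t \les p^{63/26} t^{21/13}$; this is the estimate retained in the second case $p^{59/112} \le \# \Gamma < p^{29/48}$, the lower part of the range being improved below.

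For $\# \Gamma < p^{59/112}$ the bounds of Murphy, Rudnev, Shkredov and Shteinikov, \cite[Theorems~1.4 and~6.5]{MRSS}, improve on Shkredov's, and I would use them in the form $E(\Gamma) \les p^{-3/40}(\# \Gamma)^{13/5}$ for $p^{1/2} \le \# \Gamma < p^{59/112}$ and $E(\Gamma) \les (\# \Gamma)^{49/20}$ for $\# \Gamma < p^{1/2}$. Substituting $\# \Gamma = (p-1)/t$ converts these into $T_t \les p^{101/40} t^{7/5}$ and $T_t \les p^{49/20} t^{31/20}$, the last two cases. The breakpoints are forced by matching consecutive estimates: equating $p^{1/26}(\# \Gamma)^{31/13}$ with $p^{-3/40}(\# \Gamma)^{13/5}$ gives $\# \Gamma = p^{59/112}$, and equating $p^{-3/40}(\# \Gamma)^{13/5}$ with $(\# \Gamma)^{49/20}$ gives $\# \Gamma = p^{1/2}$, so all four bounds agree at the relevant endpoints.

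The identity $T_t = t^4 E(\Gamma)$ is immediate and each case is then a one-line substitution once the right energy bound is chosen, so there is no analytic difficulty. The main obstacle I anticipate is purely bookkeeping: selecting, in each $\# \Gamma$-range, which of the competing estimates (the two Shkredov terms and the two \cite{MRSS} bounds) is smallest, and in particular confirming that \cite[Theorems~1.4 and~6.5]{MRSS} genuinely yield the stated $p^{-3/40}(\# \Gamma)^{13/5}$ and $(\# \Gamma)^{49/20}$ precisely in the claimed ranges, i.e. checking the hypotheses under which those theorems apply and that the switch from the latter to the former occurs at $\# \Gamma = p^{1/2}$.
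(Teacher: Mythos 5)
Your proposal is correct and is essentially the paper's own (implicit) argument: the paper derives the lemma exactly by writing $T_t = t^4 E(\Gamma)$ for the index-$t$ subgroup $\Gamma$ with $\#\Gamma = (p-1)/t$ and feeding in the Shkredov and Murphy--Rudnev--Shkredov--Shteinikov energy bounds, with the same breakpoints $p^{29/48}$, $p^{59/112}$, $p^{1/2}$ arising from comparing consecutive estimates. Your arithmetic in all four ranges checks out against the stated exponents.
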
  

We also need recall the following bound given by \cite[Lemma~7]{CFKLLS}.

\begin{lemma}
\label{lem:SprEq Zeros}
For
$\nu +1\ge 2$
elements $a_0, a_1, \ldots\,, a_\nu \in \F_p^*$ and
arbitrary 
integers $t_0,  t_1, \ldots , t_\nu<p$,
the number of solutions  $Q$  to the equation
$$
\sum_{i=0}^\nu a_ix^{t_i} = 0, \qquad x \in \F_p^*,
$$
with $t_0 = 0$, satisfies
$$
Q \le 2  p^{1 - 1/\nu} D^{1/\nu} + O(p^{1 - 2/\nu} D^{2/\nu}),
$$
where
$$
D = \min_{0 \le i \le \nu} \max_{j \ne i} \gcd(t_j - t_i, p-1).
$$
\end{lemma}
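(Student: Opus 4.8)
The plan is to bound $Q^\nu$, the number of ordered $\nu$-tuples of solutions, and then take $\nu$-th roots. First I would reduce to a convenient normalisation. Let $i^*$ attain the minimum in the definition of $D$, so that $\gcd(t_j-t_{i^*},p-1)\le D$ for every $j\ne i^*$. Dividing the equation $\sum_{i=0}^\nu a_ix^{t_i}=0$ by the monomial $x^{t_{i^*}}$, which is a unit for $x\in\F_p^*$, leaves the solution set unchanged, replaces the exponents by $s_i=t_i-t_{i^*}$, and makes $s_{i^*}=0$. After relabelling I may therefore assume that $t_0=0$ is itself the minimising index, so that $\gcd(t_j,p-1)\le D$ for all $j\ge 1$, and in particular $\gcd(t_1,\dots,t_\nu,p-1)\le D$. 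The vanishing exponent then plays two roles at once: it secures the gcd bound and, since $x^{t_0}=1$, it supplies a nonzero constant term $-a_0$ that will make an auxiliary linear system inhomogeneous.

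Next I would set up the tuple count. Let $\cX=\{x\in\F_p^*:\sum_i a_ix^{t_i}=0\}$, so $Q=\#\cX$ and $Q^\nu$ counts ordered tuples $(x_1,\dots,x_\nu)\in\cX^\nu$. Writing $x_k=x_1u_k$ with $u_1=1$ and putting $X_i=a_ix_1^{t_i}$ for $1\le i\le\nu$, the conditions $x_k\in\cX$ become the linear system $\sum_{i=1}^\nu X_iu_k^{t_i}=-a_0$ for $k=1,\dots,\nu$, whose coefficient matrix is the generalised Vandermonde $U=(u_k^{t_i})_{1\le k,i\le\nu}$. On the nondegenerate locus $\det U\neq 0$ the vector $(X_1,\dots,X_\nu)$ is determined by $(u_2,\dots,u_\nu)$; then $x_1$ must satisfy $x_1^{t_i}=X_i/a_i$ simultaneously for all $i$, and such $x_1$ form at most a coset of $\{x:x^{\gcd(t_1,\dots,t_\nu,p-1)}=1\}$, hence number at most $\gcd(t_1,\dots,t_\nu,p-1)\le D$. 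Summing over the at most $p^{\nu-1}$ choices of $(u_2,\dots,u_\nu)$ bounds the nondegenerate tuples by $p^{\nu-1}D$, which already yields the leading term $p^{1-1/\nu}D^{1/\nu}$ after extracting the $\nu$-th root.

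It remains to control the degenerate tuples, those with $\det U=0$, and this is the step I expect to be the main obstacle. Such tuples split into the \emph{diagonal} ones, where $u_k=u_l$ (equivalently $x_k=x_l$) for some $k\ne l$, which number at most $\binom{\nu}{2}Q^{\nu-1}$ and are absorbed by induction on $\nu$, and the genuinely degenerate ones, where the generalised Vandermonde vanishes although the $u_k$ are distinct. The latter can occur only because the exponents share common factors with $p-1$, and handling it is the delicate point: one must show that a vanishing $\det U$ with distinct $u_k$ forces the configuration onto a lower-dimensional subvariety, so that its contribution is governed by the analogous count in fewer variables and is of strictly lower order. Organising this descent so that the degenerate tuples contribute at most a constant multiple of $p^{\nu-2}D^2$ is what, together with the inductive diagonal term, accounts for the admissible constant $2$ and the error term $O(p^{1-2/\nu}D^{2/\nu})$.

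Finally I would combine the two estimates in the form $Q^\nu\le p^{\nu-1}D+(\text{degenerate contribution})$ and take $\nu$-th roots, using $(A+B)^{1/\nu}\le A^{1/\nu}+B^{1/\nu}$, to reach $Q\le 2p^{1-1/\nu}D^{1/\nu}+O(p^{1-2/\nu}D^{2/\nu})$. The only genuinely nontrivial input is the classification of the degenerate locus of the generalised Vandermonde; everything else is linear algebra over $\F_p$ together with the elementary fact that the simultaneous power conditions $x^{t_i}=c_i$ have at most $\gcd(t_1,\dots,t_\nu,p-1)$ solutions.
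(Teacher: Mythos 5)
First, a point of reference: the paper does not prove this lemma at all --- it is imported verbatim from \cite[Lemma~7]{CFKLLS} --- so there is no in-paper argument to measure you against; your proposal has to stand as a complete proof on its own. Its nondegenerate half does: after renormalising so that the minimising index is $0$, Cramer's rule pins down $(X_1,\dots,X_\nu)$, the simultaneous conditions $x_1^{t_i}=X_i/a_i$ admit at most $\gcd(t_1,\dots,t_\nu,p-1)\le D$ solutions, and you get the main term $p^{\nu-1}D$ for the count of tuples with $\det U\ne 0$.

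The gap is the degenerate locus, which you have named rather than closed, and it is not a technicality: for arbitrary exponents the generalised Vandermonde $\det\(u_k^{t_i}\)$ vanishes on a set that can be enormous even when $D$ is bounded. Already for $\nu=2$ with $u_1=1$ the singular condition is $u_2^{t_2-t_1}=1$, which has $\gcd(t_2-t_1,p-1)$ solutions; taking $t_1=1$ and $t_2=1+(p-1)/2$ gives $D\le 2$ while the singular locus has $(p-1)/2$ points. So no dimension count of the singular variety can deliver your asserted $O(p^{\nu-2}D^2)$ bound for the degenerate contribution. What actually has to happen on the singular locus is a consistency analysis: one must determine when the inhomogeneous system $\sum_{i}X_iu_k^{t_i}=-a_0$ is solvable there, and then count the $x_1$ for which $(a_1x_1^{t_1},\dots,a_\nu x_1^{t_\nu})$ lies in the resulting positive-dimensional affine solution space --- but that last count is again the number of zeros of a sparse polynomial in $x_1$, i.e.\ exactly the quantity being bounded, so the ``descent'' you invoke is circular as described. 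Until this step is carried out (or replaced by a different mechanism, as in the cited source), the argument establishes only the nondegenerate count and hence does not prove the stated bound; the constant $2$ and the secondary term $O(p^{1-2/\nu}D^{2/\nu})$ are, as written, reverse-engineered from the answer rather than derived.
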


We now derive the following estimate for points on sparse curves. 

\begin{lemma}
\label{lem:SprCurve Zeros}
For
$\nu \ge 1$
elements $a_0, a_1, \ldots\,, a_\nu \in \F_p^*$ and arbitrary 
integers $t_0,  t_1, \ldots , t_\nu<p$,
the number of solutions  $R$  to the equation
$$
\sum_{i=0}^\nu a_ix^{t_i} = \sum_{i=0}^\nu a_iy^{t_i} , \qquad x,y \in \F_p^*,
$$
with $t_0 = 0$, satisfies
$$
R \ll   p^{2 - 1/\nu} D^{1/\nu} ,
$$
where 
$$
D = \min_{0 \le i \le \nu} \max_{j \ne i} \gcd(t_j - t_i, p-1).
$$
\end{lemma}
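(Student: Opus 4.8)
The plan is to fix $y$, treat the right-hand side as a constant, and count the admissible $x$ fibre by fibre, reducing everything to the zero-counting estimate of Lemma~\ref{lem:SprEq Zeros}. Since $t_0=0$, the term $a_0x^{t_0}=a_0$ is constant and cancels, so the equation is equivalent to $\sum_{i=1}^\nu a_ix^{t_i}=\sum_{i=1}^\nu a_iy^{t_i}$ and $a_0$ in fact plays no role. Writing $N(c)=\#\{x\in\F_p^*:\ \sum_{i=0}^\nu a_ix^{t_i}=c\}$, a pair $(x,y)$ is counted by $R$ exactly when both sides equal a common value $c$, so $R=\sum_{c}N(c)^2$, while trivially $\sum_c N(c)=p-1$.

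First I would dispose of the generic fibres. For $c\neq a_0$ the equation $\sum_{i=0}^\nu a_ix^{t_i}=c$ rewrites as $(a_0-c)+\sum_{i=1}^\nu a_ix^{t_i}=0$, a sparse equation with $\nu+1$ \emph{nonzero} coefficients $a_0-c,a_1,\dots,a_\nu$ and exponents $0,t_1,\dots,t_\nu$, which is precisely the multiset of exponents defining $D$. Hence Lemma~\ref{lem:SprEq Zeros} applies verbatim with the same $D$ and gives $N(c)\le 2p^{1-1/\nu}D^{1/\nu}+O(p^{1-2/\nu}D^{2/\nu})$; since $D\le p-1$ one has $(D/p)^{1/\nu}\le 1$, so the second term is absorbed and $N(c)\ll p^{1-1/\nu}D^{1/\nu}$ uniformly for $c\neq a_0$. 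Summing the squares over these fibres, $\sum_{c\neq a_0}N(c)^2\le\bigl(\max_{c\neq a_0}N(c)\bigr)\sum_{c}N(c)\ll p^{1-1/\nu}D^{1/\nu}\cdot p=p^{2-1/\nu}D^{1/\nu}$, which is already the target bound.

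It remains to treat the single exceptional fibre $c=a_0$, that is $N(a_0)=\#\{x\in\F_p^*:\ \sum_{i=1}^\nu a_ix^{t_i}=0\}$, and this is the step I expect to be the main obstacle. Here the constant term has cancelled, only $\nu$ monomials survive, and Lemma~\ref{lem:SprEq Zeros} cannot be invoked with parameter $\nu$. When $\nu=1$ this fibre is the empty set $\{x:a_1x^{t_1}=0\}$, so $N(a_0)=0$ and the bound follows at once from the generic estimate; for $\nu\ge 2$ I would divide through by $x^{t_1}$, producing a sparse equation with a genuine constant term $a_1$ and exponents $0,t_2-t_1,\dots,t_\nu-t_1$, and apply Lemma~\ref{lem:SprEq Zeros} with parameter $\nu-1$ to obtain $N(a_0)\ll p^{1-1/(\nu-1)}\widehat D^{1/(\nu-1)}$, where $\widehat D=\min_{1\le i\le\nu}\max_{1\le j\le\nu,\,j\neq i}\gcd(t_j-t_i,p-1)$. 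The delicate point, on which the whole estimate hinges, is then to control $N(a_0)^2$ by $p^{2-1/\nu}D^{1/\nu}$: the quantity $\widehat D$ is a minimum over the index set $\{1,\dots,\nu\}$ rather than $\{0,1,\dots,\nu\}$, so it need not be bounded by $D$, and comparing $\widehat D$ with $D$ (beyond the trivial $\widehat D\le p-1$) is exactly the subtle inequality that must be established to close the argument.
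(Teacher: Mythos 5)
You have correctly reduced the problem to bounding $\sum_c N(c)^2$, and your treatment of the generic fibres $c\neq a_0$ is sound and already yields the target bound for their contribution. But the proof is not finished, and the obstacle you flag at the exceptional fibre $c=a_0$ is not a removable technicality: the quantity $\widehat D$ genuinely is not controlled by $D$, and the fibre $\{x\in\F_p^*:\ \sum_{i=1}^\nu a_ix^{t_i}=0\}$ can be enormous while $D$ stays bounded. Concretely, take $\nu=2$, $a_0=a_1=a_2=1$, $t_0=0$, $t_1=1$, $t_2=(p+1)/2$. Since $x^{(p+1)/2}=x\cdot x^{(p-1)/2}$ for $x\in\F_p^*$, the polynomial equals $1+x\bigl(1+x^{(p-1)/2}\bigr)$, which takes the value $1$ at every quadratic non-residue; hence $N(a_0)=N(1)\ge (p-1)/2$ and $R\ge (p-1)^2/4$. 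Yet $\gcd(t_1-t_0,p-1)=1$ and $\gcd(t_2-t_0,p-1)$ divides $\gcd(p+1,p-1)=2$, so $D\le 2$ and the asserted bound would give $R\ll p^{3/2}$, a contradiction for large $p$. So the step you could not complete cannot be completed: as stated (with $t_0=0$ permitted and $D$ defined as a minimum over all of $\{0,1,\dots,\nu\}$), the lemma fails, and your argument honestly delivers only $\sum_{c\neq a_0}N(c)^2\ll p^{2-1/\nu}D^{1/\nu}$ plus a separate term $N(a_0)^2$ governed by the gcd data of $t_1,\dots,t_\nu$ alone.

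For comparison, the paper argues by a different decomposition: it substitutes $x=yz$ and, for each fixed ratio $z$, bounds the number of $y$ satisfying $\sum_{i=0}^\nu a_i(z^{t_i}-1)y^{t_i-t_0}=0$ via Lemma~\ref{lem:SprEq Zeros}, plus a contribution $\le dp$ from the $z$ with all $z^{t_i}=1$. That route hits the same wall in the corresponding place: because $t_0=0$, the coefficient $a_0(z^{t_0}-1)$ vanishes identically, so for every $z$ the equation in $y$ involves only the $\nu$ monomials $y^{t_1},\dots,y^{t_\nu}$; Lemma~\ref{lem:SprEq Zeros} then applies only with parameter $\nu-1$ and with the gcd datum taken over the index set $\{1,\dots,\nu\}$, which in the example above equals $(p-1)/2$ rather than $D$. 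Your diagnosis of where the difficulty sits is therefore exactly right; fibring over the value $c$ instead of over the ratio $z=x/y$ is a cosmetic difference, and neither version closes without an additional hypothesis (or a restatement of the bound) that accounts for the degenerate fibre you isolated.
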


\begin{proof}
We now  write $x = yz$ and obtain  
\begin{align*}
R
 &
 =  \#\{(y,z) \in\(\Fp^*\)^2:~   a_0(z^{t_0}-1) + a_1 y^{t_1-t_0} (z^{t_1} -1) +\cdots \\
&\qquad \qquad \qquad \qquad \qquad \qquad \qquad \qquad   + 
a_\nu  y^{t_\nu-t_0} (z^{t_\nu} -1)= 0 \}. 
\end{align*}
If 
$$
z^{t_0}-1=z^{t_1} -1= \ldots = z^{t_\nu} -1= 0
$$
then obviously $z^d =1$ where 
$$
 d = \gcd(t_1 - t_0, \ldots, t_{\nu}-t_{0}, p-1)
$$
 and thus there are at most $d$ such values of $z$, for which there are at most $p$ values of $y$.
Otherwise, for $\nu \ge 1$, by Lemma~\ref{lem:SprEq Zeros} we have for each $z$ we have at most $O( p^{1 - 1/\nu} D^{1/\nu})$ values of $y$.  Hence $R \ll dp + p^{1 - 1/\nu} D^{1/\nu} $. Because obviously  
$d \mid t_i - t_j$ for all $1\le i < j \le \nu$, we have $d \le  D$ and thus $dp\le Dp  \ll  p^{2 - 1/\nu} D^{1/\nu}$ 
which implies the desired bound. 
\end{proof}

\section{Proof of Theorem~\ref{thm:Bound de}}

%

We fix some $a_0, \ldots, a_\nu$ with 
$$
\gcd\(a_0\ldots a_\nu, p\)=1
$$ 
and consider the sum
$$
S^* =  \sum_{x\in\Fp^*}  \ep\(x^{n_0}\(a_0 +  a_1 x^{e_1 d} + \cdots a_\nu x^{e_\nu d}\)\)
$$ 
over the multiplicative group $\Fp^*$ of the finite field of $p$ elements.  
Clearly it is enough to estimate the sum $S^*$.

Let  $n_i -n_{0} = e_i d$  for $1\leq i \leq \nu$, thus 
\[
a_0  + a_1 x^{n_1-n_0} + \cdots + a_\nu x^{n_\nu-n_0} = a_0 +  a_1 x^{e_1 d} + \cdots a_\nu x^{e_\nu d}.
\]

Let 
\[
s = \frac{p-1}{d}.
\]

Now, using that for any $y\in\Fp^*$ we have $y^{sd} = 1$, we derive
\begin{align*}
&S^*  =  \sum_{x\in\Fp^*} \ep\(x^{n_0}\(a_0 +  a_1 x^{e_1 d} + \cdots a_\nu x^{e_\nu d}\)\) \\
& = \frac{1}{p-1} \sum_{y\in\Fp^*} \sum_{x\in\Fp^*} 
 \ep\(\(xy^s\)^{n_0}\(a_0 +  a_1 (xy^s)^{e_1 d} + \cdots a_\nu (xy^s)^{e_\nu d}\)\) \\
&  =  \frac{1}{p-1} \sum_{x\in\Fp} \sum_{y\in\Fp^*} \ep\(x^{n_0} y^{n_0 s} \(a_0  + a_1 x^{n_1-n_0} + \cdots + a_\nu x^{n_\nu-n_0} \)\) .
\end{align*}

Let 
$$
N(\lambda) = \#\{x\in\Fp^*:~ a_0x^{n_0}  + a_1 x^{n_1} + \cdots + a_\nu x^{n_\nu}= \lambda\},
$$ 
thus we can write
$$
S^* =  \frac{1}{p-1} \sum_{\lambda \in\Fp} N(\lambda)   \sum_{y\in\Fp^*} \ep(\lambda  y^{n_0 s}) .
$$

By the H{\"o}lder inequality 
\begin{equation}
\label{eq:Hold}
|S^*|^4 \ll   p^{-3}  \left(\sum_{\lambda \in\Fp}  N(\lambda) \right)^2 \sum_{\lambda \in\Fp} N(\lambda)^2
 \sum_{\lambda \in\Fp} \left|  \sum_{y\in\Fp^*} \ep(\lambda  y^{n_0 s})\right|^4. 
\end{equation} 

By the  orthogonality of exponential functions
\begin{align*}
\sum_{\lambda \in\Fp} & \left|  \sum_{y\in\Fp^*} \ep(\lambda  y^{n_0 s})\right|^4\\
 & \quad =
\sum_{\lambda \in\Fp}    \sum_{u,v, y,z\in\Fp^*}
\ep\(\lambda  \(u^{n_0 s} + v^{n_0 s} - y^{n_0 s}   -z^{n_0 s}\)\)\\
& \quad =
 \sum_{u,v, y,z\in\Fp^*} \sum_{\lambda \in\Fp}   
\ep\(\lambda  \(u^{n_0 s} + v^{n_0 s} - y^{n_0 s}   -z^{n_0 s}\)\) = p  T_{ n_0  s},
 \end{align*} 
where $T_t$ is defined as in Section~\ref{sec:prelim}. 

Let 
$$r=\gcd(n_0 s, p-1) = s \gcd\(n_0, (p-1)/s\) = s \gcd\(n_0,d\) = es.
$$ 
Then we have $T_{n_0  s} = T_r$.  Hence we rewrite~\eqref{eq:Hold} as 
\begin{equation}
\label{eq:S NN2Mld}
\(S^*\)^4 \ll   p^{-3}  \left(\sum_{\lambda \in\Fp}  N(\lambda) \right)^2 \sum_{\lambda \in\Fp} N(\lambda)^2 T_r. 
\end{equation}

Trivially, we have 
\begin{equation}
\label{eq:sum N}
\sum_{\lambda \in\Fp}  N(\lambda) = p.
\end{equation}

Furthermore, we have
\begin{align*}
\sum_{\lambda \in\Fp}  & N(\lambda)^2 \\
& =  \#\{(x,y) \in\(\Fp^*\)^2:~  a_0x^{n_0}  + \cdots + a_\nu x^{n_\nu}= a_0y^{n_0}    + \cdots + a_\nu y^{n_\nu}\}.
\end{align*}
Therefore, by Lemma~\ref{lem:SprCurve Zeros} 
\begin{equation}
\label{eq:sum N2}
\sum_{\lambda \in\Fp}  N(\lambda)^2 \ll  p^{2 - 1/\nu} D^{1/\nu}.
\end{equation}

Note that 
$$
\frac{p-1}{r}  = \frac{p-1}{es}  = \frac{d}{e} .
$$
Hence Lemma~\ref{lem:Mt} 	implies 
\begin{equation}
\label{eq:Mr}
T_r \les
\begin{cases} 
 (e/d)  p^{11/3}, &\quad  \text{if $p^{29/48} \le d/e <p^{2/3}$,}\\
(e/d)^{21/13}   p^{105/26} , &\quad  \text{if $p^{59/112} \le d/e<p^{29/48}$,}\\
 (e/d)^{7/5}  p^{157/40}  &\quad  \text{if $p^{1/2} \le d/e <p^{59/112}$,}\\
(e/d)^{31/20}  p^{4} , & \quad \text{if $ d/e < p^{1/2}$.}
\end{cases} 
\end{equation} 
Hence substituting the bounds~\eqref{eq:sum N}, \eqref{eq:sum N2} and~\eqref{eq:Mr}
in~\eqref{eq:S NN2Mld} we conclude the proof.

\section{Proof of Theorem~\ref{thm:Bound ell}}

We fix some integers $a$ and $b$ with $\gcd(ab,p)=1$ and denote 
$$
S^*=   \sum_{x\in\Fp^*}   \ep\(ax^m + bx^n\).
$$

Denoting $s = (p-1)/n$ we obtain 
\begin{align*}
S^* & = \frac{1}{p-1}  \sum_{y\in\Fp^*}   \sum_{x\in\Fp^*}   \ep\(a\(xy^s\)^m + b\(xy^s\)^n\)\\
& =  \frac{1}{p-1}  \sum_{x\in\Fp^*}   \sum_{y\in\Fp^*}  \ep\(ax^m  y^{ms}   + bx^n\). 
\end{align*}
Since $\gcd(m,n)=1$ we can replace $y^{ms}$ with just $y^s$ as both functions run
through the same set of elements of $\F_p^*$ of order $n$ and take each value exactly $s$ times.
Hence 
$$
S^* =  \frac{1}{p-1}  \sum_{x\in\Fp^*}   \ep\(bx^n\)   \sum_{y\in\Fp^*}  \ep\(ax^my^{s}\).
$$
By the Cauchy inequality we have  
\begin{align*}
|S^*|^2 &  \le   \frac{1}{p}  \sum_{x\in\Fp^*}  \left|  \sum_{y\in\Fp^*}  \ep\(bx^my^{s}\)\right|^2 \\
&  = \frac{1}{p}  \sum_{y,z\in\Fp^*}  \sum_{x\in\Fp^*}  \ep\(ax^m\(y^{s}-z^s\)\)
=  \frac{1}{p}  \sum_{\lambda\in\Fp} R(\lambda)  \sum_{x\in\Fp^*}  \ep\(b\lambda x^m\)  , 
\end{align*}
where 
$$
R(\lambda) = \#\{(y,z) \in\(\Fp^*\)^2:~ y^{s}-z^s= \lambda\}.
$$ 
Clearly 
$$
 \sum_{\lambda\in\Fp} R(\lambda) ^2 = T_s, 
$$
where $T_s$ is as in Lemma~\ref{lem:Mt}. 
Therefore,  applying  the Cauchy inequality  one more time, and using the orthogonality 
of exponential functions, we obtain  
\begin{align*}
|S^*|^4 &  \ll   \frac{1}{p^2} T_s   \sum_{\lambda\in\Fp} \left| \sum_{x\in\Fp^*}  \ep\(b\lambda x^n\) \right|^2
\\
& =  \frac{1}{p^2} T_s p\#\left\{(u,v)\in \(\Fp^*\)^2:~ u^m = v^m\right \} 
= \frac{1}{p^2} T_s  hp^2 = h T_s . 
\end{align*}
Using Lemma~\ref{lem:Mt} we obtain the desired result.

\section{Comparison}
\label{sec:comp}

We first note that   bounds~\eqref{eq:Weil} and~\eqref{eq:ShpVol},  Corollary~\ref{cor:Bound de} and Theorem~\ref{thm:Bound ell}
all hold  and nontrivial under different conditions and thus are not directly comparable.

Hence we only compare  Corollary~\ref{cor:Bound de} with   the bound~\eqref{eq:CP11} and 
then also Theorem~\ref{thm:Bound ell} with  bounds~\eqref{eq:CP11} and~\eqref{eq:A65}. 
Note that when we compare Theorem~\ref{thm:Bound ell} with~\eqref{eq:CP11} we always have $e =1$ 
(since $\gcd(m,n)=1$) and we also assume that $d \le p^{89/92}$.

For example, the  bound of  Corollary~\ref{cor:Bound de} improves previous known bounds when $e$ is small
but $d$ is sufficiently large.

Indeed, 
when $p^{29/48} \le d/e<p^{2/3}$, because $p^{11/12} < p^{89/92}$ and $e$ is  sufficiently  small comparing to $p$
 our bound is clearly better than~\eqref{eq:CP11}. 
 
 When  $ p^{59/112}  \le d/e < p^{29/48}$,  then using $d \ge e p^{59/112}$
 we obtain   
$$e^{21/52}d^{-2/13}p^{105/104} \le    e^{1/4} p^{13/14}, 
 $$ 
which is    better than~\eqref{eq:CP11}  for a small $e$.

When $ p^{1/2}  \le d/e < p^{59/112}$,   then using $d \ge e p^{1/2}$ we have 
$$e^{7/20} d^{-1/10} p^{157/160}  \le  e^{1/4}p^{149/160},
$$ 
which is  also  better than~\eqref{eq:CP11}  for a small $e$.

Finally for $d \le p^{1/2}$ and $e = O(1)$, our bound improves~\eqref{eq:CP11} for $d > p^{60/253}$.

To demonstrate the strength of our result, we let  $d=p^{\delta+o(1)}$  and $e= p^{\gamma+o(1)}$. Then Figure~\ref{fig0} ($x$-axis is $\delta$ and $y$-axis is $\gamma$)  shows the regions of $(\delta, \gamma)$ where the bounds of  Corollary~\ref{thm:Bound de} are better than  both~\eqref{eq:CP11} and nontrivial.

\begin{figure}[H]
\centering 
\includegraphics[width=4.8in]{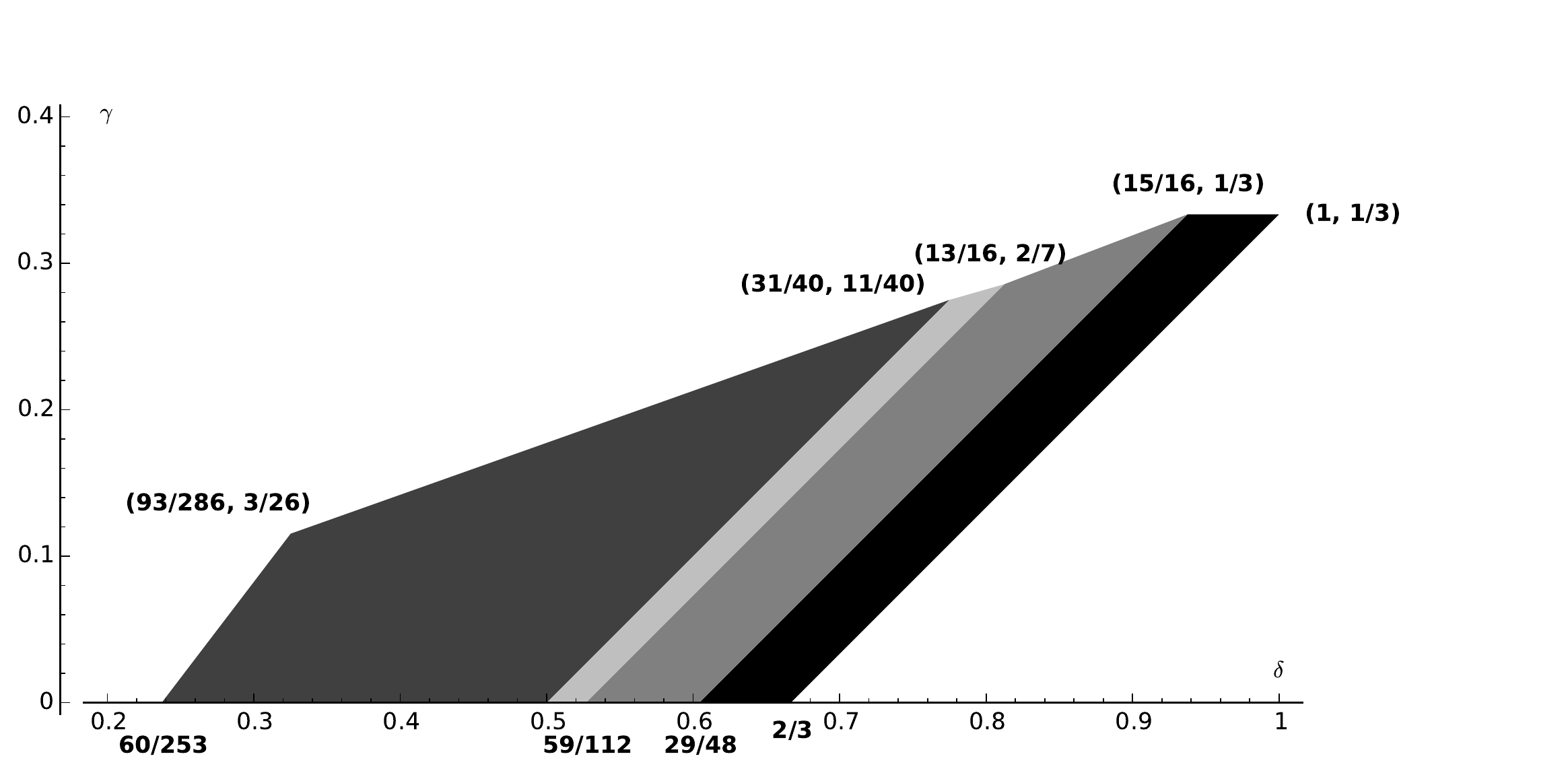}
\caption{Polygon of  Corollary~\ref{cor:Bound de} improving~\eqref{eq:CP11} with $d=p^{\delta+o(1)}$  and $e= p^{\gamma +o(1)}$}
\label{fig0}
\end{figure} 

It is easy to check that one can produce infinite series of examples with parameters arbitrary close to any point 
 inside of the polygon of  Figure~\ref{fig0}. 

Next, we note that Theorem~\ref{thm:Bound ell} gives nontrivial bounds in any one of the following cases.  
\begin{itemize}
\item[(i)] $\max\{p^{29/48},  hp^{-1/3}\} \le n < p^{2/3}$,  
\item[(ii)] $\max\{p^{59/112}, h^{13/21} p^{1/42}\} \le n< p^{29/48}$, 
\item[(iii)] $\max\{p^{1/2},   h^{5/7} p^{-3/56}\} \le n< p^{59/112}$,         
\item[(iv)]  $h^{20/31} \le n < p^{1/2}$. 
\end{itemize}

In  Case~(i),  if also  $hn > p^{2/3}$, 
then $h^{1/4} n^{-1/4} p < h^{1/2} p^{3/4}$ and we  improve~\eqref{eq:A65}. 
If also  $n > h p^{3/23}$, then $h^{1/4}n^{-1/4} p < p^{89/92}$ and we also  improve~\eqref{eq:CP11}.

In  Case~(ii), if also   
  $h n^{21/13} >  p^{27/26}$, then  $h^{1/4} n^{-21/52}  p^{105/104} $  $<  h^{1/2}  p^{3/4}$ and we  thus improve~\eqref{eq:A65}. 
If also $n > h^{13/21}p^{101/966}$, then we have
$h^{1/4}n^{-21/52}   p^{105/104}   < p^{89/92}$ and thus we   
improve~\eqref{eq:CP11} as well. 
  
 In  Case~(iii), if also  
  $h n^{7/5} >  p^{37/40}$, then  $h^{1/4} n^{-7/20}  p^{157/160} <  h^{1/2}  p^{3/4}$ and we  thus improve~\eqref{eq:A65}. 
If also $n > h^{5/7}p^{51/1288}$, then  we have
$h^{1/4}n^{-7/20} p^{157/160}  < p^{89/92}$ and thus we   
improve~\eqref{eq:CP11} as well.

In   Case~(iv),  if also   $hn^{31/20} > p$, then   we  improve~\eqref{eq:A65} 
 because   $h^{1/4} n^{-31/80} p < h^{1/2} p^{3/4}$.  If also $n > h^{20/31}  p^{60/713}$ , then $h^{1/4} n^{-31/80} p < p^{89/92}$ and thus we 
 also improve~\eqref{eq:CP11}.

 It is easy to see that in each of the Cases~(i), (ii),  (iii) and (iv) the above ranges of $h$ and $m$ overlap so in each of them we 
 sometimes improve both~\eqref{eq:CP11} and~\eqref{eq:A65} simultaneously. 

To demonstrate our result, we let $h=p^{\varepsilon+o(1)}$ and $n= p^{\eta+o(1)}$. Then Figute~\ref{fig1} ($x$-axis is $\varepsilon$ and $y$-axis is $\eta$)  shows the regions of $(\varepsilon, \eta)$ where the bounds of  Theorem~\ref{thm:Bound ell} are better than 
both~\eqref{eq:CP11} and~\eqref{eq:A65}.   We want to emphasize that $hn < p$ because $\gcd(m,n)=1$ implies that  $h=\gcd(m, p-1) = \gcd(m, p-1) \leq (p-1)/n$. 

\begin{figure}[H]
\centering 
\includegraphics[width=4.8in]{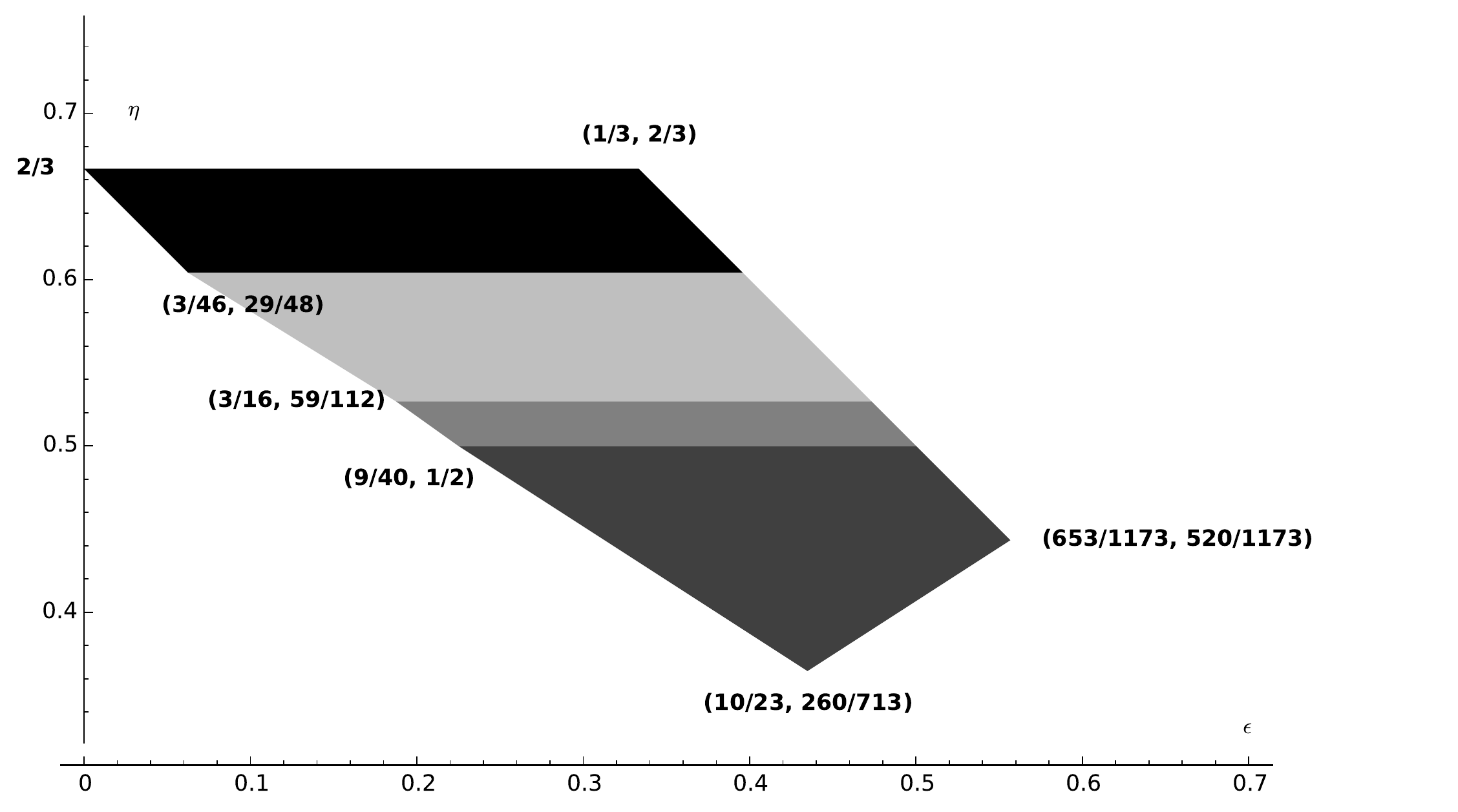}
\caption{Polygon of Theorem~\ref{thm:Bound ell} improving~\eqref{eq:CP11} and~\eqref{eq:A65} with $h= p^{\varepsilon+o(1)}$   and $n= p^{\eta+o(1)}$}
\label{fig1}
\end{figure}

As before, we note that is easy to check that one can produce infinite series of examples with parameters arbitrary close to any point 
 inside of the polygon of  Figure~\ref{fig1}.   
 
 \section{Comments}

 One can certainly use higher powers of sums $S^*$ in the proofs of both Theorems~~\ref{thm:Bound de} and~\ref{thm:Bound ell}.
 For example, for any integer $\nu\ge 1$ we can generalise~\eqref{eq:S NN2Mld} as 
$$
\(S^*\)^{2\nu}  \ll   p^{-2\nu +1} \(\sum_{\lambda \in\Fp}  N(\lambda) \)^{2\nu-2} \sum_{\lambda \in\Fp} N(\lambda)^2 T_{\nu,r}, 
$$
where $T_{\nu,r}$ is the number of solutions to the equation
$$
u_1^t + \ldots + u_\nu^t \equiv v_1^t + \ldots + v_\nu^t  \pmod p, \quad 1 \le u_1, v_1, \ldots , u_\nu ,v_\nu < p.
$$ 
Hence we need analogues of Lemma~\ref{lem:Mt} 	for  $T_{\nu,r}$. Such nontrivial bounds on $T_{\nu,r}$
are indeed available, for example they can be derived from~\cite[Lemma~4.4]{MRSS} for $\nu =3$ and~\cite[Theorems~3 and~25]{Shkr2}
for larger values of $\nu$. 
However with the present knowledge of such bounds it is not clear whether one can  obtain better bounds of exponential 
sums.

\section*{Acknowledgement}

The authors are grateful to Todd Cochrane  for very  helpful suggestions.
The authors would like to thank Ilya Shkredov for clarification of the results of~\cite{Shkr1}.

The authors is grateful to the organisers of the   14th International Conference
on Finite Fields and their Applications, $\F_{q^{14}}$,  12--14 June, 2019, Vancouver, 
where this work started.

During the preparation of this work, I.S  was supported  by Australian Research Council  Grant DP170100786
and by the Natural Science Foundation of China Grant~11871317.  Q. W. was supported by NSERC of Canada  (RGPIN-2017-06410).

\end{document}